\newtheorem{theorem}{Theorem}
\theoremstyle{definition}
\theoremstyle{remark}
\numberwithin{equation}{section}
\def\mbe{M_b(E)}
\def\cbe{C_b(E)}
\def\mbu{M_b(U)}
\def\bdu{\partial{U}}
\def\ck{C(K)}
\def\cm{C(M)}
\def\cbdu{C(\partial{U})}
\def\fil{\Phi}
\def\px{\mu_x}
\def\fixpsi{\mathcal{T}(\Psi)}
\def\lmf{C(\Psi)}
\def\hk{h_k}
\def\h2{h_2}
\def\lm{\lambda}
\def\gm{\gamma}
\def\lmfz{C(\Psi)_0}
\begin{document}

\title{Asymptotic stability\\
for a class of Markov semigroups}
\author{Bebe Prunaru}

\address{Institute of Mathematics ``Simion Stoilow''
  of the Romanian
Academy,
P.O. Box 1-764,
 RO-014700 Bucharest, 
Romania}
\email{Bebe.Prunaru@imar.ro}

\keywords{Markov semigroup, boundary value problem, asymptotic stability}
\subjclass[2000]{Primary: 47D07, Secondary: 60J05, 60J25, 37A30, 47A35}

\begin{abstract}

Let $U\subset K$ be an open and dense subset of a compact metric space and let 
$\{\Phi_t\}_{t\ge0}$ be a Markov semigroup on the space of bounded Borel measurable functions on $U$ with the strong Feller property. Suppose that for each $x\in\bdu$ there exists a barrier $h\in C(K)$ at $x$ such that $\Phi_t(h)\ge h$ for all $t\ge0$. Suppose also that every   real-valued $g\in C(K)$ with $\Phi_t(g)\ge g$ for all $t\ge0$ and which attains its global maximum at a point inside $U$ is constant. Then for each  $f\in C(K)$ there exists the uniform limit  $F=\lim_{t\to\infty}\Phi_t(f)$. Moreover $F$ is continuous on $K$, agrees with $f$ on $\partial{U}$ and $\Phi_t(F)=F$ for all $t\ge0$.  

\end{abstract}

\maketitle

Let $E$ be a locally compact Hausdorff  space and let  $\mbe$ be the space of all complex-valued, bounded and Borel measurable  functions on $E$. Let also $\cbe$ be the set of all continuous functions in $\mbe$.

A linear map
$ \fil:\mbe\to\mbe$ 
is called a Markov operator if
for each $x\in E$ there exists a  probability Borel measure
 $\px$ on $E$ such that 
 $$\fil(f)(x)=\int f d\px \quad     \forall f\in\mbe.$$
 
 A Markov operator $\Phi$ is said to have the strong Feller property if 
 $\fil(f)\in\cbe$  for every $f\in\mbe$. 
 The main result of this paper is the following:

 \begin{theorem}\label{main}
 
 Let $K$ be a compact metric space and let $U\subset K$ be a dense open subset
 such that $\bdu$ contains at least two points.
 Let $\ck$ be the space of all continuous complex-valued functions on $K$ and let $\cbdu$ be the corresponding space for $\bdu$.
 
 Let $\fil:\mbu\to\mbu$
  be  a Markov operator with the strong Feller property.
  Suppose that $\fil$ satisfies the following  conditions:
 
 \begin{itemize}

  \item[(A)]  For each point $x\in\bdu$ there exists $h\in\ck$
   such that $h(x)=0$, $h(y)<0$ for all $y\in K\backslash\{x\}$ 
   and $\fil(h_U)\ge h_U$  on $U$, where $h_U$ is the restriction 
  of $h$ to $U$;

 \item[(B)]   If   $g\in\ck$ is  a real valued function  with
$\fil(g_U)\ge g_U$ and if there exists  $z\in U$ such that 
$g(z)=\max\{g(x): x\in K\}$ 
 then $g$ is constant on $K$.

 \end{itemize}

 Then, for each $f\in\cbdu$ there exists a unique 
    function $G\in\ck$ such that 
 $\fil(G_U)=G_U$ 
 and  $G(x)=f(x)$ for all $x\in\bdu$. 
 Moreover,  if  $F\in\ck$  is an arbitrary continuous 
 extension of $f$ to $K$ then the sequence
 $\{\fil^n(F_U)\}_n$ converges uniformly on $U$ to $G_U$.

\end{theorem}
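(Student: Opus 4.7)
The approach is to first lift $\fil$ to an operator $T: \ck \to \ck$, deduce uniqueness from (B), treat subharmonic data by monotone iteration and Dini's theorem, and finally handle general $F$ by bracketing it between a subharmonic $F_-$ and a superharmonic $F_+$ sharing its boundary values.

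\emph{Extension to $\ck$.} For each $F \in \ck$ and $x_0 \in \bdu$, the barrier $h$ from (A), together with the continuity of $F$, gives a two-sided sandwich $F(x_0) - \epsilon + Ch \le F \le F(x_0) + \epsilon - Ch$ on $K$, valid for any $\epsilon > 0$ once $C = C(\epsilon, F)$ is large. Restricting to $U$, applying $\fil$, and using $\fil(h_U) \ge h_U$ yields
\[
F(x_0) - \epsilon + C h_U(x) \le \fil(F_U)(x) \le F(x_0) + \epsilon - C h_U(x) \qquad (x \in U),
\]
so $\fil(F_U)(x) \to F(x_0)$ as $x \to x_0$. Combined with strong Feller, $\fil(F_U)$ extends to a continuous $TF \in \ck$ with $TF|_{\bdu} = F|_{\bdu}$, making $T$ a positive linear contraction fixing constants. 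Uniqueness of $G$ is then immediate: if $G_1, G_2$ both satisfy the conclusion, the real and imaginary parts of $G_1 - G_2$ are $T$-invariant, vanish on $\bdu$, and (B) forces them to vanish identically.

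\emph{Subharmonic data and Dini.} For real $F \in \ck$ with $TF \ge F$, the iterates $T^n F$ increase pointwise to some $F^*$. Monotone convergence against each $\px$ gives $TF^* = F^*$, and $F^*$ is continuous on $U$ by strong Feller. Continuity of $F^*$ at each $y_0 \in \bdu$ follows by iterating the upper sandwich from the previous step, giving $F^*(x) \le F(y_0) + \epsilon - C h_{y_0}^*(x)$, where $h_{y_0}^* = \lim T^n h_{y_0}$; this limit is continuous at $y_0$ because $h_{y_0} \le h_{y_0}^* \le 0$ and $h_{y_0}(x) \to 0$ as $x \to y_0$. Dini's theorem then promotes pointwise to uniform convergence $T^n F \to F^* \in \ck$, and the superharmonic case is symmetric.

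\emph{General case.} Given an arbitrary continuous extension $F \in \ck$ of $f$, construct Perron-type envelopes
\[
F_-(x) = \sup\{s(x) : s \in \ck,\; Ts \ge s,\; s \le F\},
\]
and $F_+$ dually (infimum of supersolutions $\ge F$). The barriers in (A) supply, at each boundary point, enough subsolutions approximating $f$ so that $F_\pm|_{\bdu} = f$ and $F_\pm$ are continuous at $\bdu$, while (B) together with the usc/lsc regularizations ensures $F_\pm$ are continuous sub-/superharmonic functions on all of $K$. Applying the previous step yields uniform limits $F_\pm^* \in \ck$ with $T F_\pm^* = F_\pm^*$ and $F_\pm^*|_{\bdu} = f$; uniqueness forces $F_-^* = F_+^* =: G$, and the squeeze $T^n F_- \le T^n F \le T^n F_+$ delivers the uniform convergence $T^n F \to G$. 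The main obstacle is the regularity of the Perron envelopes $F_\pm$ — showing they are continuous and attain the prescribed boundary data $f$ — which is the abstract analog of classical Perron solvability for the Dirichlet problem, to be carried out here using only (A), (B), and the strong Feller property.
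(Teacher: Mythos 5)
Your first two steps are sound and essentially reproduce steps (1)--(2) of the paper's own proof: the barrier sandwich gives the boundary control uniformly in $n$, uniqueness follows from (B) applied to the real and imaginary parts, and the monotone/Dini argument handles subsolutions. The divergence --- and the gap --- lies in the reduction of a general $F$ to the monotone case. The paper does this by an operator-algebraic argument: it forms the norm-closed algebra $A$ generated by the fixed-point set, shows via a polarization identity and induction that $A\subset\lmf$, uses Stone--Weierstrass to see that the fixed points restrict onto all of $\cbdu$, and then proves, via a Gelfand-transform argument exploiting (B), that the common zero set of the functions $\pi(|h|^2)-|h|^2$ is exactly $\bdu$, so that $f-\theta(f)$ lies in a closed ideal on which the iterates converge uniformly to $0$. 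You instead propose Perron envelopes $F_\pm$ and a squeeze. As written this is not a complete proof: the assertion that ``(B) together with the usc/lsc regularizations ensures $F_\pm$ are continuous'' is precisely the classical Perron regularity problem, it is nowhere established, and it is not clear that (B) (a maximum principle) yields interior continuity of a supremum of subsolutions. You yourself flag this as the main obstacle still ``to be carried out''; a proof cannot defer its central step.

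That said, the obstacle is narrower than you fear, and your route can be completed without proving interior continuity of $F_\pm$ at all. Working with $\mathrm{Re}\,F$, the family $\{s\in\ck:\ \fil(s_U)\ge s_U,\ s\le \mathrm{Re}\,F\}$ is nonempty (it contains constants and the translated barriers $\mathrm{Re}\,F(x_0)-\epsilon+Ch_{x_0}$) and is closed under finite maxima, so $F_-$ is a bounded lower semicontinuous, hence Borel, function satisfying $\fil((F_-)_U)\ge (F_-)_U$ by positivity of $\fil$. Then $\fil^n((F_-)_U)$ increases to a $\fil$-fixed limit which is continuous on $U$ by the strong Feller property and continuous at each point of $\bdu$ with value $\mathrm{Re}\,f$ by the two-sided barrier bounds, which hold uniformly in $n$; moreover each iterate with $n\ge1$ is itself continuous on $K$ after extension by $\mathrm{Re}\,f$, so Dini's theorem applies to the sequence indexed by $n\ge1$ and the convergence is uniform. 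The dual construction gives $F_+$, uniqueness identifies the two limits, and the squeeze $\fil^n((F_-)_U)\le\fil^n((\mathrm{Re}\,F)_U)\le\fil^n((F_+)_U)$ finishes the argument. With this repair your proof is correct and genuinely more elementary than the paper's: it avoids the algebra $A$, the polarization trick, and the Gelfand-theoretic step entirely, at the cost of treating real and imaginary parts separately.
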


\begin{proof}

 (1) First of all, it can be proved, exactly as in Proposition 1.3 from \cite{AE},
  that the boundary condition (A) implies the following.
For each $f\in\ck$ and for each $x\in\bdu$
   $$\lim\sb{y\to x}\sup\sb{n\ge1}|(\fil^n(f_U))(y)-f(x)|=0.$$

This enables us  to define a map 
$$\Psi:\ck\to\ck$$
 by setting, for each $f\in\ck$, 
$\Psi(f)(x)=\fil(f_U)(x)$ for  $x\in U$  
and $\Psi(f)(x)=f(x)$ for  $x\in\bdu.$

 (2) We  show  that  if $f\in\ck$ is a real-valued function such that 
 $\fil(f_U)\ge f_U$ on $U$, then 
the sequence  $\{\Psi^n(f)\}$ converges uniformly  on $K.$
Indeed, this sequence is monotone increasing and if $g$ is its pointwise limit then 
$g$ is Borel measurable and 
$\fil(g_U)=g_U$.
Since $\fil$ has the strong Feller property, it follows that $g_U$ is continuous on $U$.
Moreover (1) implies that 
$$\lim\sb{y\to x}g(y)=g(x)$$ for every $x\in\bdu.$
This shows that  $g\in\ck$
and  Dini's theorem shows that the convergence is uniform on $K$.

 (3) Let 
 $$\fixpsi=\{h\in\ck : \Psi(h)=h\}$$
 and let $A$ be the norm closed subalgebra of $\ck$ generated by $\fixpsi$.
 Let $\lmf$ be the set of all $f\in\ck$ for which the sequence
 $\{\Psi^n(f)\}$ is uniformly convergent on $K$ and denote $\pi(f)$ its limit.
 Let also 
 $$\lmfz=\{f\in\lmf : \pi(f)=0\}.$$
 We will show that  $A\subset\lmf$.

 Let  $h\in\fixpsi$.
 Then 
 $\Psi(|h|^2)\ge |h|^2$
 therefore (2) shows that $|h|^2\in\lmf$
 and also that $\pi(|h|^2)-|h|^2\ge0$.
 This also shows that the norm closed ideal of $\ck$ generated by all the functions of the form $\pi(|h|^2)-|h|^2$ with $h\in\fixpsi$ is contained in $\lmfz$.  
 Indeed it  is easy to see that if $f\in\lmfz$ is non-negative on $K$ 
  then $fg\in\lmfz$   
 for every $g\in\ck$.

 We shall now prove that for any finite set of $k$ functions 
 from $\fixpsi$ their product 
belongs to $\lmf$. 
Let  $k=2$. 
 If $h_1,\h2\in\fixpsi$ then 
 $$h_1\h2=(1/4)\sum\sb{m=0}\sp{3}i^m |g_m|^2$$
 where $i=\sqrt{-1}$
 and
 $g_m=(h_1+i^m\bar{\h2})$.
 Since $g_m\in\fixpsi$ we see that 
 $h_1\h2\in\lmf$.

 Let $k\ge3$ and assume that  every product of at most $k-1$ elements 
 from $\fixpsi$ belongs to $\lmf$.
   Let $h_1,\dots,\hk$ in $\fixpsi$ and let $g=h_1\cdot\dots\cdot\hk$.
  Then
  $$g=
  (h_1\h2-\pi(h_1\h2))\cdot h_3\cdot\dots\cdot\hk
  +\pi(h_1\h2)\cdot h_3\cdot\dots\cdot\hk.$$
  By what we have already proved the first summand belongs to $\lmfz$ and the second belongs, by the induction hypothesis, to $\lmf$.         
 This shows that $A\subset\lmf$.

  (4) Consider the map 
 $$\rho:A\to\cbdu$$
 that takes any  $f\in A$ into its restriction to $\bdu$. 
It turns out that $\rho$  is onto.
To see this, we first observe that  the boundary condition (A) 
 together with (2) implies  that for each $x\in\bdu$
  there exists $g\in\ck$ such that  
  $g(x)=0$, $g(y)<0$ for every $y\in\bdu\backslash\{x\}$ and $\fil(g_U)=g_U$.
 In particular $g\in\fixpsi$.
 This shows that the range of $\rho$ separates the points of $\bdu$ therefore $\rho$ is onto.
This proves the existence  part of the theorem.
Uniqueness follows easily from (B).

 (5) We shall denote 
$$\theta:\ck\to\ck$$
the map which takes a function $f\in\ck$ into the uniquely determined function in $\fixpsi$ which agrees with $f$ on $\bdu$.
It follows that for each $f\in\lmf$ we have $\theta(f)=\pi(f)$.
In particular, $\theta(f)=f$ for every $f\in\fixpsi$.

Let 
$$L=\{g\in\ck: g=\pi(|h|^2)-|h|^2 \text{ for some }h\in\fixpsi\}.$$

If $g\in L$, then it follows from  (3) that 
$g\ge0$.
Moreover  $\Psi(g)\le g$ hence $\fil(g_U)\le g_U$.

Suppose now that there exists a point $z_0\in U$ such that 
$g(z_0)=0$ for every $g\in L$.
It then follows from (B)  that $g=0$ on $U$ hence on $K$ for every $g\in L$. 
This easily implies that $\fixpsi$ is closed under multiplication hence it equals $A$.
Indeed if $h_1$ and $\h2$ are functions in $\fixpsi$ then 
$\pi(h_1\h2)-h_1\h2$ can be written as a linear combination of elements from $L$ 
(see step 3).
It then follows that 
$\pi(h_1\h2)=h_1\h2$
therefore $\fixpsi=A$.

This shows that the map 
$\theta:\ck\to\ck$ defined above is multiplicative on $\ck$ and its range equals $A$.
Let $M$ be the maximal ideal space of $A$ 
and for each $h\in A$ let $\hat{h}\in\cm$ be its Gelfand transform.
Since $A$ is self-adjoint, 
the Gelfand transform is an isometric isomorphism from $A$ onto $\cm$
(the Banach algebra of all continuous, complex-valued functions on $M$).
It then follows that there exists a continuous one-to-one  map
$\gm: M\to K$
such that
$\widehat{\theta(f)}=f\circ\gm$ for every $f\in\ck$.
Moreover, since $A\subset\ck$ there exists a continuous surjective map 
$\lm:K\to M$ such that
$h=\hat{h}\circ\lm$ for every $h\in A$.

We now claim that $\gm(M)\subset\bdu$.
Suppose there exists $\alpha\in M$ such that $z=\gm(\alpha)\in U$.
Then 
there exists   a non-constant real valued function $h\in A$  
such that 
 $$\hat{h}(\alpha)=\sup \{h(x) : x\in K\}.$$
  Every such $h$ attains its maximum only on $\bdu$.
 However
$$ \hat{h}(\alpha)=h(\gamma(\alpha))=h(z)$$
This shows that $\gm(M)\subset\bdu$.

Let $z\in U$ and let $x=\gm\circ\lm(z)$. Then $x\in\bdu$.  
Let $h\in\fixpsi$ which attains its maximum on $K$ 
only at this point.
Then
$$h(x)=(h\circ\gm\circ\lm)(z)=(\hat{h}\circ\lm)(z)=h(z).$$
We get a contradiction.
This means that there is no $z_0\in U$ such that $g(z_0)=0$ for all $g\in L$.

 (6) It follows from (5) that 
$$\bdu=\{x\in K : g(x)=0 \text{ for every } g\in L\}.$$
It then follows that the closed ideal of $\ck$ generated by $L$ is precisely 
the ideal of all $f\in\ck$ which vanish identically on all points of $\bdu$.
Recall now that we already proved in part 3 that for any $g$ in the closed ideal generated by $L$ the sequence $\{\Psi^n(g)\}$ converges uniformly to $0$.
In particular, this holds true for  all functions $g$ of the form
$g=f-\theta(f)$ with $f\in\ck$.
This completes the proof of this theorem.

\end{proof}

This proof works as well for Markov semigroups with continuous parameter.
Examples of  Markov operators on complex domains which satisfy all the conditions in Theorem \ref{main} are given in \cite{AE}. 
As a matter of fact, the results and the methods used in \cite{AE} strongly motivated and inspired our research.
We have also used some ideas appearing in \cite{Pr}.
Theorem \ref{main} 
can be used to give an alternate proof for the main result in \cite{Liu}.

\end{document}